\newtheorem{thm}{Theorem}[section]
\newtheorem{lem}[thm]{Lemma}
\newtheorem{prop}[thm]{Proposition}
\newtheorem{ex}[thm]{Example}
\theoremstyle{remark}
\theoremstyle{definition}
    \newtheorem{defn}[thm]{Definition}
\newtheorem{rem}[thm]{Remark}
\numberwithin{equation}{section}
\def\F {{\mathbb F}}
\def\R {{\mathbb R}}
\def\C {{\mathbb C}}
\def\B {{\mathcal B}}
\def\L {{\mathcal L}}
\def\M {{\mathcal M}}
\newcommand{\im}{\operatorname{im}}
\def\red#1 {\textcolor{red}{#1 }}
\def\blue#1 {\textcolor{blue}{#1 }}
\numberwithin{equation}{section}
\begin{document}

\title[Enumerating Invariant Subspaces]{Enumerating Invariant Subspaces of $\R^n$}

\author{Joshua Ide}
\address{Department of Mathematics, Shippensburg University, Pennsylvania, USA}
\email[Josh Ide]{ji1574@ship.edu}

\author{Lenny Jones}
\address{Department of Mathematics, Shippensburg University, Pennsylvania, USA}
\email[Lenny~Jones]{lkjone@ship.edu}

\date{\today}

\begin{abstract}
In this article, we develop an algorithm to calculate the set of all integers $m$ for which there exists a linear operator $T$ on $\R^n$ such that $\R^n$ has exactly $m$ $T$-invariant subspaces. A brief discussion is included as how these methods might be extended to vector spaces over arbitrary fields.
\end{abstract}

\subjclass[2010]{Primary 47A15; Secondary 47A46, 05A17}
\keywords{linear operator, vector space, invariant subspace, Jordan canonical form.}

\maketitle


\section{Introduction}
The importance of invariant subspaces is undeniable in the study of linear operators on vector spaces. One of the most infamous unsolved problems in mathematics, known as the Invariant Subspace Problem, is the question of whether every separable Hilbert space $V$ of dimension greater than 1 over $\C$ has a closed, nontrivial $T$-invariant subspace $W$ (i.e. $T(W)\subseteq W$) when $T$ is a bounded linear operator on $V$. For a very recent survey on this problem, see \cite{CP2011}. In this article, our main focus is on finite dimensional vector spaces $V\simeq \R^n$ over $\R$. While these spaces are Hilbert spaces, we do not require the added structure of an inner product and the associated topology. In our situation, it is easy to see that there always exist $T$-invariant subspaces of $V$. For example, the kernel of $T$, the image of $T$, the trivial subspace and the space $V$ itself are $T$-invariant. Moreover, any eigenspaces of $T$ are also $T$-invariant. Aside from these obvious examples, there are other well-known techniques for generating $T$-invariant subspaces. For example, for any vector $v\in V$, the cyclic subspace of $V$ spanned by $\{v, T(v), T^2(v), \ldots \}$ is $T$-invariant. Also, any generalized eigenspace for $T$ is $T$-invariant. But even armed with these additional tools, it is not clear in general how to determine if, for given positive integers $m$ and $n$, there exists a linear operator $T$ on $\R^n$ such that $\R^n$ has \emph{exactly} $m$ $T$-invariant subspaces.
  For example, could it be that there exists a linear operator $T$ on $\R^4$ such that $\R^4$ has exactly 10 $T$-invariant subspaces? Indeed, no such linear operator exists. In this article, we develop an algorithm to calculate the set of all integers $m$ for which there exists a linear operator $T$ on $\R^n$ such that $\R^n$ has exactly $m$ $T$-invariant subspaces. We end with a brief discussion as to how these ideas might be extended to vector spaces over arbitrary fields.

\section{Notation and Definitions}
In this section we present some notation and definitions from linear algebra and combinatorics that are associated to the techniques used in this paper.
\subsection{Linear Algebra}
Throughout this article we let $\F$ be a field, and $V$ be an $n$-dimensional vector space over $\F$, so that $V\simeq\F^n$. We let $T\in \L(V)$, where $\L(V)$ is the space of all linear operators on $V$. If $\B$ is an ordered basis for $V$, we use the notation $[T]_{\B}$ to denote the matrix for $T$ relative to the ordered basis $\B$, and simply write $[T]$ for the matrix for $T$ relative to the standard basis. We denote the image of $T$ and the kernel of $T$ as $\im(T)$ and $\ker(T)$, respectively. For a subspace $W$ of $V$, we let $T|_{W}$ indicate the restriction of $T$ to $W$. We let $c_T:=c_T(x)$ and $m_T:=m_T(x)$ denote the characteristic polynomial and minimal polynomial of $T$, respectively.
For the majority of this article, we are concerned with the case when $\F=\R$. Therefore, we let $\{e_i \mid i=1,2,\ldots, n\}$ denote the standard basis for $\R^n$, and 
 we assume that all eigenvalues of $T$, if any, are real numbers.

\begin{defn}[Standard Jordan Block]\label{Def:StandardJordanBlockDef}
Let $\lambda\in \R$, and let $k>0$ be an integer. A \emph{standard Jordan block} of size $k$ corresponding to $\lambda$ is the $k\times k$ matrix
\[J_{\lambda, k}:=\begin{bmatrix} \lambda & 1 & 0 & \cdots & 0 & 0\\ 0 & \lambda & 1 & \cdots&0 & 0\\
 \vdots & \vdots & \vdots & \ddots & \vdots & \vdots\\ 0 & 0 & 0 & \cdots & \lambda & 1\\
 0 & 0 & 0 & \cdots & 0 & \lambda \end{bmatrix}.\]
For any integer $r\ge 0$, we also define
\[E_{\lambda}^{(r)}:=\ker\left( \left(J_{\lambda,k}-\lambda I_{k}\right)^{r}\right).\]
\end{defn}

\begin{defn}[Real Jordan Block]\label{Def:RealJordanBlockDef}
Let $\zeta=a+bi$ with $b>0$, 
 and let $C=\begin{bmatrix} a & -b \\ b & a\end{bmatrix}$. For an integer $k>0$, a \emph{real Jordan block} of size $2k$ corresponding to $\zeta$ is the $2k \times 2k$ matrix
\[J_{\zeta,k}^*:=\begin{bmatrix} C & I_2 & 0 & \cdots & 0 & 0\\ 0 & C & I_2 & \cdots & 0 & 0\\ \vdots & \vdots & \vdots & \ddots & \vdots & \vdots\\ 0 & 0 & 0 & \cdots & C & I_2\\ 0 & 0 & 0 & \cdots & 0 & C\end{bmatrix}.\]
Let \[D_{k}=\begin{bmatrix} C & 0 &  \cdots & 0 & 0\\ 0 & C &  \cdots & 0 & 0\\ \vdots & \vdots & \ddots & \vdots & \vdots \\ 0 & 0 & \cdots & C & 0\\ 0 & 0 & \cdots & 0 & C\end{bmatrix}\] be a $2k\times 2k$ matrix, and for any integer $r\ge 0$, define
\[U_{\zeta}^{(r)}:=\ker\left( \left(J_{\zeta,k}^*-D_{k}\right)^r \right).\]
\end{defn}

\begin{rem}\label{Rem:Kernels}
  Note that $v=(v_1,v_2,v_3,\ldots,v_k)\in E_{\lambda}^{(r)}$ if and only if $v_{r+1}=v_{r+2}=\cdots =v_k=0$. Similarly, $v=(v_1,v_2,v_3,\ldots,v_{2k})\in U_{\zeta}^{(r)}$ if and only if $v_{2r+1}=v_{2r+2}=\cdots =v_{2k}=0$.
\end{rem}

\subsection{Compositions, Partitions and Multipartitions}\label{Sec: CPM}

A \emph{composition} $\mu$ of the integer $n\ge 0$, written $\mu\vDash n$, is a finite ordered set of nonnegative integers $(\mu_1,\mu_2,\ldots ,\mu_r)$ such that 0 appears at most once in the list and $\sum_{i=1}^r\mu_i=n$. 
 The integers $\mu_i$ are referred to as the \emph{parts} of $\mu$. We define the \emph{length} of a composition $\mu=(\mu_1,\dots,\mu_r)$, denoted $\ell(\mu)$, to be $r$. 
A \emph{partition} of $n$ is a composition whose parts are all positive and weakly decreasing from left to right.   If $\mu$ is a partition of $n$ we write $\mu\vdash n$.
A \emph{$\mu$--multipartition of $n$} corresponding to $\mu=(\mu_1,\dots,\mu_r)\vDash n$ is an ordered set of partitions $\theta:=\theta(\mu)=(\theta_1,\dots,\theta_r)$ with $\theta_i\vdash\mu_i$ for each $i=1,\dots, r$. 
Note that from any $\mu$--multipartition $\theta$ of $n$ we can derive a unique composition of $n$ by removing the internal parentheses.
We call this unique composition, which we denote $d(\mu,\theta)$, the \emph{derived composition} of the $\mu$--multipartition $\theta$.
For example, if $\mu=(5,6,3)$ and $\theta =((4,1),(3,2,1),(2,1))$, then $d(\mu,\theta)=(4,1,3,2,1,2,1)$ of $n=14$.

\section{The Results}\label{Sec: Results}

\begin{prop}\label{Prop:Similar}
Let $V$ be a finite dimensional vector space over a field $\F$, and let $S,T\in \L(V)$. If $S$ and $T$ are similar, then $V$ has exactly the same number of $S$-invariant subspaces as $T$-invariant subspaces.
\end{prop}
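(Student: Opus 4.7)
The plan is to exhibit an explicit bijection between the $T$-invariant subspaces of $V$ and the $S$-invariant subspaces of $V$, induced by the similarity transformation. Since $S$ and $T$ are similar, by definition there exists an invertible $P\in\L(V)$ such that $T=P^{-1}SP$, and I would begin the proof by fixing such a $P$.

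Next I would define the map $\Phi\colon \{W\subseteq V : T(W)\subseteq W\}\to \{W'\subseteq V : S(W')\subseteq W'\}$ by $\Phi(W)=P(W)$, and verify that $\Phi$ is well-defined. This amounts to a one-line computation: if $W$ is $T$-invariant, then for any $Pw\in P(W)$ we have $S(Pw)=P(P^{-1}SP)(w)=P(T(w))\in P(W)$, so $P(W)$ is $S$-invariant. Since $P$ is an invertible linear map, $P(W)$ is a subspace whenever $W$ is.

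The bijectivity of $\Phi$ would then be established by exhibiting a two-sided inverse. Define $\Psi(W')=P^{-1}(W')$ on the set of $S$-invariant subspaces; the symmetric calculation using $S=PTP^{-1}$ shows that $\Psi$ maps $S$-invariant subspaces to $T$-invariant subspaces. Since $P$ is invertible as a linear operator, $\Psi\circ\Phi$ and $\Phi\circ\Psi$ are both the identity on the respective sets of invariant subspaces, giving a bijection. The conclusion that the two sets have the same cardinality follows immediately.

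There is essentially no serious obstacle in this argument; it is a routine exercise in unwinding the definition of similarity. The only minor point worth being explicit about is that, because everything in sight is a bijection of sets, we do not need any finite-dimensionality hypothesis for this particular statement (although the hypothesis is in force throughout the paper). I would therefore keep the write-up short and purely formal, with no appeal to structural results such as the Jordan form.
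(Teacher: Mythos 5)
Your proposal is correct and follows essentially the same approach as the paper: both use the similarity $T=P^{-1}SP$ to show that $W\mapsto P(W)$ and $W'\mapsto P^{-1}(W')$ carry invariant subspaces to invariant subspaces and are mutually inverse, yielding a bijection. Your write-up is slightly more explicit about the two-sided inverse and the observation that finite-dimensionality is not needed, but the argument is the same.
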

\begin{proof}
Since $S$ and $T$ are similar, there exists an invertible matrix $P$ such that $P^{-1}SP=T$.
 First suppose $W$ is an $S$-invariant subspace of $V$, and consider the subspace $P^{-1}(W)$ of $V$. Then
\[T(P^{-1}(W))=P^{-1}SP(P^{-1}(W))=P^{-1}(S(W))\subseteq P^{-1}(W).\] On the other hand, if $T(W)\subseteq W$, then $S(P(W))\subseteq P(W)$. This establishes a one-to-one correspondence between the $S$-invariant subspaces and the $T$-invariant subspaces, which completes the proof.
\end{proof}

The existence of a standard Jordan canonical form for a linear operator $T \in \L(\F^n)$, where $\F$ is an algebraically closed field, is well-known. What we require here is a generalization of this fact for linear operators on $\R^n$. In Section \ref{Sec: Extension}, we discuss a generalization over an arbitrary field. However, that particular generalization, when viewed over $\R$, differs from the one we use here. For more information, the interested reader should consult \cite{AW1992}.

\begin{thm}[Real Jordan Canonical Form]\label{Thm:RJCF}
Let $T \in \L(\R^n)$ and suppose that $c_T=f_1\cdots f_r$, where either $f_j(x)=(x-\lambda_j)^{l_j}$ or $f_j(x)=\left( \left(x-a_j\right)^2+b_j^2\right)^{m_j}$ with $b_j>0$. Then there is an ordered basis $\B$ of $\R^n$ such that
\[[T]_{\B}=\begin{bmatrix} J_1 & 0 &  \cdots & 0 & 0\\ 0 & J_2 &  \cdots & 0 & 0\\ \vdots & \vdots & \ddots & \vdots & \vdots \\ 0 & 0 & \cdots & J_{t-1} & 0\\ 0 & 0 & \cdots & 0 & J_t\end{bmatrix}=\bigoplus_{j=1}^t J_j,\]
where either $J_j$ is a standard Jordan block corresponding to the eigenvalue $\lambda_j \in \R$, or $J_j$ is a real Jordan block corresponding to $\zeta_j=a_j+b_ji$. In either case, we refer to $J_j$ as a Jordan block of $T$.
\end{thm}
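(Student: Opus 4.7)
The plan is to pass to the complexification, apply the well-known complex Jordan canonical form there, and then descend back to $\R^n$ by exploiting the fact that complex conjugation commutes with the extension of $T$. Concretely, extend $T$ to a $\C$-linear operator $T_\C$ on $\C^n$ with the same matrix as $[T]$; because $[T]$ has real entries, the conjugation map $\sigma : \C^n \to \C^n$ satisfies $T_\C \circ \sigma = \sigma \circ T_\C$. The roots of $c_T$ are precisely the real $\lambda_j$ (of algebraic multiplicity $l_j$) together with the conjugate pairs $\zeta_j, \bar\zeta_j$ (each of algebraic multiplicity $m_j$).

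First I would apply the primary decomposition theorem in $\R[x]$ to obtain $\R^n = \bigoplus_{j=1}^r N_j$ with $N_j = \ker(f_j(T))$ each $T$-invariant, reducing the problem to producing the required basis on each $N_j$ separately. When $f_j(x) = (x - \lambda_j)^{l_j}$ with $\lambda_j \in \R$, the restriction $T|_{N_j}$ has a single real eigenvalue, and the usual Jordan form argument (which runs entirely over $\R$ once one has a real generalized eigenspace for a real eigenvalue) produces an ordered real basis of $N_j$ in which $[T|_{N_j}]$ is a direct sum of standard Jordan blocks $J_{\lambda_j, k}$.

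For the case $f_j(x) = ((x-a_j)^2 + b_j^2)^{m_j}$, I would complexify $N_j$ to $(N_j)_\C \subseteq \C^n$, which decomposes as $G_{\zeta_j} \oplus G_{\bar\zeta_j}$, where $G_\mu$ denotes the generalized $\mu$-eigenspace of $T_\C$. Because $\sigma$ commutes with $T_\C$ and exchanges $G_{\zeta_j}$ with $G_{\bar\zeta_j}$, any Jordan chain $v_1, v_2, \ldots, v_k$ for $\zeta_j$ (with $(T_\C - \zeta_j I) v_i = v_{i-1}$ and $v_0 = 0$) conjugates to a Jordan chain $\bar v_1, \ldots, \bar v_k$ for $\bar\zeta_j$, and together they span a $\sigma$-stable complex subspace $W$ of dimension $2k$. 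Writing $v_i = x_i + i y_i$ with $x_i, y_i \in \R^n$, the vectors $y_1, x_1, y_2, x_2, \ldots, y_k, x_k$ form an ordered real basis of the $T$-invariant real subspace $W \cap \R^n$. A direct computation from $T v_i = \zeta_j v_i + v_{i-1}$ yields
\[
T x_i = a_j x_i - b_j y_i + x_{i-1}, \qquad T y_i = b_j x_i + a_j y_i + y_{i-1},
\]
from which one reads off that the matrix of $T|_{W \cap \R^n}$ in this ordered basis is exactly the real Jordan block $J_{\zeta_j, k}^*$ of Definition \ref{Def:RealJordanBlockDef}. Running this construction on each Jordan chain appearing in $G_{\zeta_j}$ and concatenating the bases produces the required block-diagonal form on $N_j$; gathering the bases from all the $N_j$ produces the desired basis $\B$ of $\R^n$.

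The routine parts are the primary decomposition and the real-eigenvalue case; the main technical point is the final verification in the second case, where the ordering $y_i, x_i$ (rather than $x_i, y_i$) is precisely what causes the diagonal $2\times 2$ blocks to come out as the matrix $C$ of Definition \ref{Def:RealJordanBlockDef} and the super-diagonal blocks as $I_2$, whereas the opposite ordering would produce $C^\top$ instead. I expect this sign and ordering bookkeeping to be the main obstacle to writing the argument down cleanly, since the rest is a standard assembly of complexification with the classical complex Jordan form.
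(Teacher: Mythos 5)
The paper does not prove this theorem at all: it is stated as a known result with a pointer to Adkins--Weintraub \cite{AW1992}, so there is no in-paper argument to match yours against. Your complexification-and-descent proof is correct and is the standard analytic route: the primary decomposition over $\R[x]$, the observation that $\sigma$ intertwines $T_\C$ and swaps $G_{\zeta_j}$ with $G_{\bar\zeta_j}$, and the computation $Tx_i = a_jx_i - b_jy_i + x_{i-1}$, $Ty_i = b_jx_i + a_jy_i + y_{i-1}$ all check out, and you are right that the ordering $y_i, x_i$ is what makes the diagonal blocks come out as $C$ rather than $C^\top$ with the paper's sign convention $C = \left[\begin{smallmatrix} a & -b \\ b & a\end{smallmatrix}\right]$. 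This is a genuinely different route from the one the paper leans on: the cited reference (and the paper's own Section 4) obtains canonical forms module-theoretically over $\F[x]$, producing generalized Jordan blocks built from companion matrices of irreducible polynomials, and the paper even remarks that this general form over $\R$ differs from the real Jordan form used here; your approach buys a self-contained, concrete derivation from the familiar complex Jordan form at the cost of not generalizing to fields without a finite algebraically closed extension (a limitation the paper itself flags in Section 4). Two small points you gloss over but should record in a full writeup: the real vectors $y_1, x_1, \ldots, y_k, x_k$ are linearly independent because $v_1,\ldots,v_k,\bar v_1,\ldots,\bar v_k$ are linearly independent over $\C$ (as $G_{\zeta_j} \cap G_{\bar\zeta_j} = 0$) and the change of coordinates $v_i = x_i + iy_i$, $\bar v_i = x_i - iy_i$ is invertible; and one must take the Jordan chains to run over a chain basis of all of $G_{\zeta_j}$ so that the resulting real subspaces exhaust $N_j$. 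Both are routine.
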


\begin{rem}\label{Rem:wlogRJCF}
By Proposition \ref{Prop:Similar} and Theorem \ref{Thm:RJCF}, we may assume without loss of generality that $[T]$ is in real Jordan canonical form.
\end{rem}
\begin{thm}\label{Thm:Inf}
Let $T \in \L(\R^n)$, and let $\alpha \in \C$ be such that $c_T(\alpha)=0$, where either $\alpha=\lambda\in \R$ or $\alpha=a+bi$ with $b>0$.
Suppose that $[T]= \bigoplus_{j=1}^t J_j$ is the real Jordan form for $T$. If there exist two or more Jordan blocks for $T$ corresponding to $\alpha$, then $\R^n$ has infinitely many $T$-invariant subspaces.
\end{thm}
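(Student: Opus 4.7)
The plan is to invoke Remark \ref{Rem:wlogRJCF} to assume $[T] = \bigoplus_{j=1}^t J_j$ is in real Jordan canonical form, single out two Jordan blocks corresponding to $\alpha$, and exhibit an explicit one-parameter family of distinct $T$-invariant subspaces built from the ``bottom'' of each of those two blocks. I would split into the two cases dictated by the statement.

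First, suppose $\alpha = \lambda \in \R$ and that $J_p = J_{\lambda,k_1}$ and $J_q = J_{\lambda,k_2}$ are two Jordan blocks for $T$ corresponding to $\lambda$. Let $u$ be the first standard basis vector supporting the block $J_p$ inside $\R^n$, and let $v$ be the first standard basis vector supporting $J_q$. By Remark \ref{Rem:Kernels} (applied with $r=1$), both $u$ and $v$ lie in the $\lambda$-eigenspace of $T$, and they are linearly independent since they come from disjoint blocks. For every $t \in \R$ the one-dimensional subspace $W_t := \mathrm{span}(u + tv)$ satisfies $T(u+tv) = \lambda(u+tv)$, so $W_t$ is $T$-invariant. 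Distinct values of $t$ give distinct lines in the two-dimensional space $\mathrm{span}(u,v)$, yielding infinitely many $T$-invariant subspaces.

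Next, suppose $\alpha = a+bi$ with $b>0$, and that $J_p = J_{\zeta,k_1}^*$ and $J_q = J_{\zeta,k_2}^*$ are two real Jordan blocks for $T$ corresponding to $\zeta = a+bi$. Let $u_1, u_2$ denote the first two standard basis vectors supporting $J_p$ and $v_1, v_2$ the first two standard basis vectors supporting $J_q$. By Remark \ref{Rem:Kernels} (applied with $r=1$), the pairs $u_1,u_2$ and $v_1,v_2$ span the two-dimensional $T$-invariant subspaces $U_\zeta^{(1)}$ attached to each block, and on each of these subspaces $T$ acts by the matrix $C$ of Definition \ref{Def:RealJordanBlockDef}. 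Explicitly, $T(u_1)=au_1+bu_2$, $T(u_2)=-bu_1+au_2$, and analogously for $v_1,v_2$. For every $t\in\R$ set
\[W_t := \mathrm{span}(u_1+tv_1,\; u_2+tv_2).\]
A direct computation shows $T(u_1+tv_1) = a(u_1+tv_1)+b(u_2+tv_2)$ and $T(u_2+tv_2) = -b(u_1+tv_1)+a(u_2+tv_2)$, so $W_t$ is $T$-invariant of dimension two, and distinct $t$ produce distinct subspaces (for instance, their intersections with $\mathrm{span}(v_1,v_2)$ differ).

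The main obstacle is really just the complex case: one has to notice that no one-dimensional real invariant subspace exists near a real Jordan block, so the family must be built at the two-dimensional level using the pair $u_1,u_2$ (respectively $v_1,v_2$) on which $T$ acts as $C$. Once that observation is made, the verification that $W_t$ is $T$-invariant and varies with $t$ is an immediate calculation parallel to the real-eigenvalue case.
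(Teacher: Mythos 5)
Your proposal is correct and takes essentially the same approach as the paper: in the real case both arguments reduce to the fact that the $\lambda$-eigenspace has dimension at least $2$, and in the complex case your family $W_t=\mathrm{span}(u_1+tv_1,\,u_2+tv_2)$ is exactly the paper's family $\mathrm{span}(cu_1+du_3,\,cu_2+du_4)$ with $(c,d)=(1,t)$. One small slip: your parenthetical justification of distinctness fails, since $W_t\cap\mathrm{span}(v_1,v_2)=\{0\}$ for every $t$; distinctness instead follows by noting that $u_1+tv_1\in W_{t'}$ forces $t=t'$ upon comparing coefficients.
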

\begin{proof}
  Suppose first that $\alpha=\lambda \in \R$, so that $\lambda$ is an eigenvalue of $T$. Let $E_{\lambda}$ be the corresponding eigenspace of $T$. If $T$ has exactly $s>1$ standard Jordan blocks corresponding to $\lambda$, then $\dim(E_{\lambda})=s$. Then, since $s>1$, there are infinitely many 1-dimensional subspaces of $E_{\lambda}$, and all such subspaces are $T$-invariant. Thus, the theorem is proved in this case.

  Now suppose that $\alpha=a+bi$, with $b>0$. Assume that $T$ has exactly $s>1$ real Jordan blocks $J_1,J_2,\ldots , J_s$, of respective sizes $2k_1,2k_2,\ldots , 2k_s$, corresponding to $\alpha$. Let $U$ be the subspace of $\R^n$ spanned by the ordered basis \[\{u_1,u_2,u_3,u_4\}:=\{e_1,e_2,e_{2k_1+1},e_{2k_1+2}\}.\] Choose $c,d\in \R$ with either $c\ne 0$ or $d\ne 0$. Let
\[w_1=cu_1+du_3 \quad \mbox{and} \quad w_2=cu_2+du_4.\] Then $w_1$ and $w_2$ are linearly independent, and the space spanned by $w_1$ and $w_2$ is a 2-dimensional subspace $W$ of $U$. We claim that $W$ is $T$-invariant. To see this, let $w\in W$ and write \[w=r_1w_1+r_2w_2=r_1cu_1+r_2cu_2+r_1du_3+r_2du_4,\]
where $r_1, r_2\in \R$.
It is then straightforward to show that
\[T(w)=(ar_1-br_2)w_1+(br_1+ar_2)w_2\in W.\] Varying the choices of $c$ and $d$ yields infinitely many such distinct subspaces $W$ of $U$ and the proof is complete.
\end{proof}
Since it is our goal to enumerate the $T$-invariant subspaces of $\R^n$, we assume, for every zero $\alpha$ of $c_T$ described in Theorem \ref{Thm:Inf}, that $T$ has exactly one corresponding Jordan block. One ramification of this assumption is that $c_T=m_T$. In addition, if $c_T=\prod_{i=1}^t f_i^{k_i}$ and $f_i(\alpha)=0$, then the size of the Jordan block corresponding to $\alpha$ is $k_i$ or $2k_i$, depending on whether $\alpha$ is real or non-real, respectively. Since the Jordan blocks of $T$ are independent \cite{HK1971}, we focus on the $T$-invariant subspaces of a single block. Then we can piece together the various $T$-invariant subspaces from each block via direct sums to determine all $T$-invariant subspaces.

 \begin{lem}\label{Lem:SE and RU}
 Let $T\in \L(\R^n)$, and assume that $[T]=J$, where $J$ is a Jordan block.
 \begin{enumerate}
 \item \label{Part:SE} If $n=k$ and $J=J_{\lambda, k}$, then for each integer $r\ge 0$, $E_{\lambda}^{(r)}$ is $T$-invariant. Also,
  \[E_{\lambda}^{(0)}<E_{\lambda}^{(1)}<\cdots <E_{\lambda}^{(k-1)}<E_{\lambda}^{(k)}=E_{\lambda}^{(k+1)}=\cdots\]
  with $\dim(E_{\lambda}^{(r)})=r$ for all $0\le r\le k$.
  \item \label{Part:RU} If $n=2k$ and $J=J_{\zeta, k}^*$, then
 for each integer $r\ge 0$, $U_{\zeta}^{(r)}$ is $T$-invariant. Also,
  \[U_{\zeta}^{(0)}<U_{\zeta}^{(1)}<\cdots <U_{\zeta}^{(k-1)}<U_{\zeta}^{(k)}=U_{\zeta}^{(k+1)}=\cdots\]
  with $\dim(U_{\zeta}^{(r)})=2r$ for all $0\le r\le k$.
  \end{enumerate}
 \end{lem}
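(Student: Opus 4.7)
The plan is to prove both parts by a direct coordinate-based computation, leveraging the explicit description of $E_\lambda^{(r)}$ and $U_\zeta^{(r)}$ provided by Remark \ref{Rem:Kernels}. The two halves of the lemma are essentially parallel: the real Jordan case just replaces scalar shifts by $2 \times 2$ block shifts, so I would write them out in lockstep.

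For part (\ref{Part:SE}), I would decompose $T = \lambda I_k + N$, where $N := J_{\lambda, k} - \lambda I_k$ is the standard super-shift sending $e_j \mapsto e_{j-1}$ for $j \ge 2$ and $e_1 \mapsto 0$. Remark \ref{Rem:Kernels} identifies $E_\lambda^{(r)}$ as the span of $e_1, \ldots, e_r$, so $N$ carries $E_\lambda^{(r)}$ into $E_\lambda^{(r-1)} \subseteq E_\lambda^{(r)}$, while $\lambda I_k$ preserves every subspace. This gives $T$-invariance. The dimension count, the strict chain of inclusions up to $r = k$, and the stabilization for $r \ge k$ (because $E_\lambda^{(k)}$ is already all of $\R^k$) then read off directly from Remark \ref{Rem:Kernels}.

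For part (\ref{Part:RU}), the same strategy applies with $T = D_k + N^*$ and $N^* := J_{\zeta, k}^* - D_k$. Partitioning coordinates into $k$ consecutive pairs, $N^*$ is a block super-shift that sends the pair indexed by block $j$ into the pair indexed by block $j-1$, and Remark \ref{Rem:Kernels} identifies $U_\zeta^{(r)}$ as the span of the first $r$ coordinate pairs. Therefore $N^*$ maps $U_\zeta^{(r)}$ into $U_\zeta^{(r-1)}$, and $D_k$, being block-diagonal with $2 \times 2$ blocks $C$, acts within each pair and cannot expand the support of a vector. Combining these shows $T(U_\zeta^{(r)}) \subseteq U_\zeta^{(r)}$, and the dimension formula, strict chain, and stabilization at $r = k$ again follow from Remark \ref{Rem:Kernels}.

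The only place where one must be a bit careful is the indexing in part (\ref{Part:RU}): one must verify that the super-diagonal $I_2$ blocks in $J_{\zeta, k}^*$ genuinely act as a shift on paired coordinates rather than mixing them with the diagonal $C$ blocks. Once this is confirmed by writing out a single application of $N^*$ to an arbitrary vector in $U_\zeta^{(r)}$, both halves of the lemma reduce to elementary bookkeeping.
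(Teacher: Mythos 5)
Your proof is correct, but the invariance argument runs along a different track from the paper's. The paper observes that $J_{\lambda,k}$ commutes with $J_{\lambda,k}-\lambda I_k$ (and $J_{\zeta,k}^*$ with $J_{\zeta,k}^*-D_k$), so for $v$ in the kernel of $(J_{\lambda,k}-\lambda I_k)^r$ one gets $(J_{\lambda,k}-\lambda I_k)^r T(v)=T(J_{\lambda,k}-\lambda I_k)^r(v)=0$ directly; this is the standard fact that the kernel of any polynomial in $T$ is $T$-invariant, and it needs no coordinate description of the kernel at all. You instead decompose $T=\lambda I_k+N$ (resp. $T=D_k+N^*$) and track how the (block) shift moves the coordinate subspaces identified in Remark \ref{Rem:Kernels}, showing $N(E_{\lambda}^{(r)})\subseteq E_{\lambda}^{(r-1)}$ and likewise for $N^*$. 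Your route is more concrete and makes the shift structure visible --- which is arguably useful preparation for the harder Theorem \ref{Thm: T-invariant Single Block} --- but it leans entirely on the explicit identification of $E_{\lambda}^{(r)}$ and $U_{\zeta}^{(r)}$ with coordinate subspaces, whereas the paper's commutation argument gives invariance for free and only invokes that identification for the dimension and chain statements (as you also do, correctly). Both proofs are complete; the one point you rightly flag, that the superdiagonal $I_2$ blocks act as a clean pair-shift without mixing into the diagonal $C$ blocks, does check out on writing $J_{\zeta,k}^* e_{2j-1}$ and $J_{\zeta,k}^* e_{2j}$ explicitly.
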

 \begin{proof}
 To prove \emph{\ref{Part:SE}.}, let $v\in E_{\lambda}^{(r)}$. Since $J_{\lambda,k}$ commutes with $J_{\lambda,k}-\lambda I_{k}$, we have
 \[\left(J_{\lambda,k}-\lambda I_{k}\right)^{r}T(v)=T\left(J_{\lambda,k}-\lambda I_{k}\right)^{r}(v)=T(0)=0,\]
 so that $E_{\lambda}^{(r)}$ is $T$-invariant. The other statements in part \emph{\ref{Part:SE}.} follow from the fact that $\{e_1,e_2,\ldots, e_r\}$ is a basis for $E_{\lambda}^{(r)}$.

 For \emph{\ref{Part:RU}.}, as in the proof of part \emph{\ref{Part:SE}.}, $U_{\zeta}^{(r)}$ is $T$-invariant since $J_{\zeta,k}^*$ commutes with $J_{\zeta,k}^*-D_{k}$. The other statements in part \emph{\ref{Part:RU}.} follow from the fact that $\{e_1,e_2,\ldots, e_{2r}\}$ is a basis for $U_{\zeta}^{(r)}$.
 \end{proof}

\begin{thm}\label{Thm: T-invariant Single Block}
Let $T\in \L(\R^n)$, and assume that $[T]=J$, where $J$ is a Jordan block. If $n=k$ and $J=J_{\lambda, k}$, then the $T$-invariant subspaces of $\R^n$ are precisely $E_{\lambda}^{(r)}$, for $0\le r \le k$. If $n=2k$ and $J=J_{\zeta, k}^*$, then the $T$-invariant subspaces of $\R^n$ are precisely $U_{\zeta}^{(r)}$, for $0\le r \le k$.
\end{thm}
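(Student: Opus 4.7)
The plan is to reduce both cases to a single analysis via the irreducible monic polynomial $p$ with $m_T = c_T = p^k$: $p(x) = x - \lambda$ when $J = J_{\lambda, k}$, and $p(x) = (x-a)^2 + b^2$ when $J = J_{\zeta, k}^*$. Lemma \ref{Lem:SE and RU} already supplies the inclusion in one direction, so what remains is to show that every $T$-invariant subspace $W \subseteq \R^n$ coincides with some $\ker(p(T)^s)$, and that this kernel is exactly $E_\lambda^{(s)}$ or $U_\zeta^{(s)}$.

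For any $T$-invariant $W$, the operator $p(T)$ is a polynomial in $T$, so $W$ is $p(T)$-invariant; since $p(T)^k = m_T(T) = 0$, the restriction $p(T)|_W$ is nilpotent with some nilpotency index $s$, giving $W \subseteq \ker(p(T)^s)$. When $p(x) = x - \lambda$ we immediately have $\ker(p(T)^s) = E_\lambda^{(s)}$. When $p(x) = (x-a)^2 + b^2$, I would verify $\ker(p(T)^s) = U_\zeta^{(s)}$ by factoring
\[ p(T) = \bigl(2(D_k - aI) + N\bigr) \cdot N, \qquad N := T - D_k, \]
which falls out algebraically from $D_k N = N D_k$ together with $D_k^2 - 2aD_k + (a^2+b^2)I = 0$ (inherited from $C^2 - 2aC + (a^2+b^2)I_2 = 0$). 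The factor $Q := 2(D_k - aI) + N$ is invertible: $(D_k - aI)^2 = -b^2 I$ makes $2(D_k - aI)$ invertible, and $Q$ differs from it by a commuting nilpotent. Thus $p(T)^r = Q^r N^r$ with $Q^r$ invertible, giving $\ker(p(T)^r) = \ker(N^r) = U_\zeta^{(r)}$.

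Promoting $W \subseteq \ker(p(T)^s)$ to equality is the heart of the argument: by definition of nilpotency index there exists $w \in W$ with $p(T)^{s-1}w \neq 0$. The annihilator of $w$ in $\R[x]$ is then a monic polynomial dividing $p^s$ but not $p^{s-1}$, so the irreducibility of $p$ forces it to equal $p^s$. The cyclic subspace $\R[T]w \subseteq W$ is thereby isomorphic to $\R[x]/(p^s)$, with $\R$-dimension $s \cdot \deg p$, matching $\dim \ker(p(T)^s)$ by Lemma \ref{Lem:SE and RU}. The sandwich $\R[T]w \subseteq W \subseteq \ker(p(T)^s)$ between spaces of equal dimension forces $W = \ker(p(T)^s)$, which is $E_\lambda^{(s)}$ or $U_\zeta^{(s)}$ as claimed.

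The step I expect to be most technical is the factorization $p(T) = QN$ with $Q$ invertible in the real-Jordan-block case. The $J_{\lambda,k}$ case alone admits a shorter proof that sidesteps cyclic subspaces entirely: since $N = T - \lambda I$ is nilpotent on $\R^k$ with $N^k = 0$, the relation $(N|_W)^{\dim W} = 0$ gives $W \subseteq E_\lambda^{(\dim W)}$, and $\dim E_\lambda^{(\dim W)} = \dim W$ closes the argument immediately. The real-block case is genuinely harder because $\dim U_\zeta^{(r)} = 2r$ rather than $r$, so the dimension of $W$ alone no longer pinpoints the correct kernel, which is why the witness $w$ and the cyclic-span dimension count are needed.
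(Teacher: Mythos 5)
Your proof is correct, but it takes a genuinely different route from the paper's. The paper works entirely in coordinates: for $J_{\lambda,k}$ it picks $w\in W$ whose rightmost nonzero entry sits as far right as possible and applies $(T-\lambda I_k)$ repeatedly to peel off $e_r,\dots,e_1$; for $J_{\zeta,k}^*$ it runs an induction on $r$ built around explicitly constructed vectors $\widetilde{w},\widehat{w}$ and a $4\times 4$ determinant computation (equal to $4b^4$) to manufacture enough linearly independent vectors in $W$. You instead treat both blocks uniformly as $\R[x]$-modules with $c_T=m_T=p^k$ for $p$ irreducible: nilpotency of $p(T)|_W$ gives $W\subseteq\ker\bigl(p(T)^s\bigr)$, irreducibility of $p$ forces the annihilator of a witness $w$ to be exactly $(p^s)$, and the dimension count $\dim\R[T]w=s\deg p=\dim\ker\bigl(p(T)^s\bigr)$ closes the sandwich. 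Your one block-specific computation, the factorization $p(T)=\bigl(2(D_k-aI)+N\bigr)N$ with the first factor invertible (invertible plus commuting nilpotent), correctly identifies $\ker\bigl(p(T)^r\bigr)$ with $U_\zeta^{(r)}$ and is a clean substitute for the paper's determinant lemma. The trade-off: the paper's argument is elementary and self-contained at the cost of length and some under-justified coordinate claims, while yours requires comfort with cyclic subspaces and annihilator ideals but is shorter, avoids unverified determinants, and transfers essentially verbatim to the generalized Jordan blocks over arbitrary fields discussed in Section \ref{Sec: Extension}, where the coordinate approach would become unwieldy.
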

\begin{proof}
Let $W$ be a $T$-invariant subspace of $\R^n$. First suppose that $n=k$ and $J=J_{\lambda, k}$. Then, for any $w\in W$, we have that
 \[(T-\lambda I_k)w=T(w)-\lambda w\in W.\] Thus $W$ contains the set of vectors
 \[A_w=\{(T-\lambda I_k)^iw\mid i=0,1,2,\ldots\}.\]
  Choose $w=(a_1,a_2,\ldots ,a_r,0,\ldots,0)\in W$ such that $a_r\ne 0$ and no other vector in $W$ has a nonzero component in any location to the right of $r$. Then $\dim(W)\le r$ and
 \begin{align*}
 A_w&=\{(a_1,a_2,\ldots ,a_r,0,\ldots,0), (a_2,a_3,\ldots ,a_r,0,\ldots,0), (a_3,a_4,\ldots ,a_r,0,\ldots,0),\\
 & \quad \ldots ,(a_r,0,\ldots,0),(0,0,\ldots,0)\}.
 \end{align*}
 Since $a_r\ne 0$, it follows that $W$ contains the set $\{e_1,e_2,\ldots ,e_r\}$. Hence, $W=E_{\lambda}^{(r)}$ from part \emph{\ref{Part:SE}.} of Lemma \ref{Lem:SE and RU}.

 Now suppose that $n=2k$ and $J=J_{\zeta, k}^*$, where $\zeta=a+bi$ with $b>0$. Let $W$ be a $T$-invariant subspace of $\R^{2k}$. Let $w=(w_1,w_2,\ldots ,w_s,0\ldots,0)\in W$ such that $w_s\ne 0$ and no other vector in $W$ has a nonzero component in any location to the right of $s$. Then $s$ is even. For if $w_s\ne 0$ and $s$ is odd, then the component of $T(w)$ at location $s+1$ is $bw_s$, which is nonzero since $b\ne 0$, contradicting our choice of $w$. So let $s=2r$. Note that $W\subseteq U_{\zeta}^{(r)}$. We proceed by induction on $r$ to show that $W=U_{\zeta}^{(r)}$. Observe that
 \begin{align*}
 \widetilde{w}:=&\frac{\frac{T(w)-aw}{-b}+w_{2r-1}w}{1+w_{2r-1}^2}=(*,*,\ldots,*,1,0,\ldots ,0)\in W, \quad \mbox{and}\\
 \\
 \widehat{w}:=&w-w_{2r-1}\widetilde{w}=(*,*,\ldots ,*,0,1,0,\ldots ,0)\in W,
 \end{align*} where the $1$ is at location $2r-1$ in $\widetilde{w}$ and the $1$ is at location $2r$ in $\widehat{w}$. The case $r=1$ is then immediate. A key observation is that, for any $r\ge 2$ and $\widehat{w}=(*,*,\ldots ,*,x,y,0,1,0,\ldots ,0)$ where $x$ is in location $2r-3$, the determinant of the $4\times 4$ matrix whose rows are the four components of $\widehat{w}$, $T(\widehat{w})$, $T^2(\widehat{w})$ and $T^3(\widehat{w})$ in locations $2m-3, 2m-2, 2m-1$ and $2m$, is $4b^4\ne 0$. Thus, when $r=2$, we have $\dim(W)=4$, and hence $W=U_{\zeta}^{(2)}$. Assume by induction, for all $r\le m-1$, that $W=U_{\zeta}^{(r)}$ when the rightmost location of a nonzero component for any vector in $W$ is $2r$. Now suppose $r=m$, so that the rightmost location of a nonzero component of any vector in $W$ is $2m$. From our key observation,  the four vectors $\widehat{w}$, $T(\widehat{w})$, $T^2(\widehat{w})$ and $T^3(\widehat{w})$, where $\widehat{w}=(*,*,\ldots ,*,x,y,0,1,0,\ldots ,0)$ with $x$ in location $2m-3$, are linearly independent. Thus, $W$ contains an element of the form $(*,*,\ldots ,*,*,z,0,\ldots ,0)$, where $z\ne 0$ is in location $2m-2$. Let
 \[W_0:=\{w\in W \mid w=(w_1,w_2,\ldots ,w_{2m-2},0,\ldots ,0)\}.\]
 Clearly, $W_0$ is a subspace of $W$, and since $w_{2m-2}\ne 0$ for some $w\in W_0$, we have by induction that $W_0=U_{\zeta}^{(m-1)}$. Then since
 \[U_{\zeta}^{(m-1)}\subset W \subseteq U_{\zeta}^{(m)},\]
 we see that $\{e_1,e_2,\ldots ,e_{2m-2}\}\subseteq W$. Using this information and the fact that $\widetilde{w},\widehat{w}\in W$, we get that $\{e_{2m-1}, e_{2m}\}\subseteq W$ to conclude that $W=U_{\zeta}^{(m)}$, which completes the proof of the theorem.
 \end{proof}


In light of the previous results, we let $T \in \L(\R^n)$ with $c_T=m_T=\prod_{j=1}^{t}f_j^{k_j}$, where each $f_j$ is either a quadratic factor with non-real zeros, or a linear factor. We also
let \[[T]=\begin{bmatrix} J_1 & 0 &  \cdots & 0 & 0\\ 0 & J_2 &  \cdots & 0 & 0\\ \vdots & \vdots & \ddots & \vdots & \vdots \\ 0 & 0 & \cdots & J_{t-1} & 0\\ 0 & 0 & \cdots & 0 & J_t\end{bmatrix}=\bigoplus_{j=1}^t J_j\] be the real Jordan form of $T$,
where each $J_j$ corresponds to the factor $f_j^{k_j}$ in $c_T$, and that the real Jordan blocks appear first on the diagonal, reading from upper left to lower right, followed by the standard Jordan blocks. Note that if $J_j$ is a real Jordan block, then its size is $2k_j$, while if $J_j$ is a standard Jordan block, its size is $k_j$. Thus, if the last row in the last real Jordan block is $2r$, then $0\le r \le \left \lfloor \frac{n}{2} \right \rfloor$, and the last $s=n-2r$ rows of $[T]$ are dedicated to the standard Jordan blocks. We are now in a position to prove the main result.

\begin{thm}\label{Thm: Main Theorem}
 For a positive integer $r$, with $0\le r \le \left \lfloor \frac{n}{2} \right \rfloor$, let $s=n-2r$ and let $d_r(\mu,\theta)$ be the derived composition of the $\mu$-multipartition $\theta$, where $\mu =(r,s)\vDash n$ and $\theta=(\theta_1,\theta_2)$, with $\theta_1\vdash r$ and $\theta_2 \vdash s$. Then the exact set $\M_n$ of positive integers $m$ for which there exists $T \in \L(\R^n)$ such that $\R^n$ has exactly $m$ T-invariant subspaces is
\[\M_n=\left\{ \prod_{i=1}^{\ell\left( d_r(\mu,\theta)\right)}\left(d_r(\mu,\theta)_i+1\right)\, \mid \, 0\le r \le \left \lfloor \frac{n}{2} \right \rfloor\right\}.\]
\end{thm}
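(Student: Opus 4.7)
The plan is to reduce to real Jordan canonical form, decompose the count of $T$-invariant subspaces across the distinct irreducible factors of $c_T$ via a primary decomposition argument, count the invariant subspaces of each individual block by invoking Theorem \ref{Thm: T-invariant Single Block}, and finally repackage the block sizes in the language of multipartitions. By Remark \ref{Rem:wlogRJCF}, Theorem \ref{Thm:Inf}, and the standing assumption that $c_T=m_T=\prod_{j=1}^t f_j^{k_j}$, we may take $[T]=\bigoplus_{j=1}^t J_j$ in real Jordan form with each irreducible factor $f_j^{k_j}$ contributing exactly one Jordan block $J_j$ of parameter $k_j$. Writing $V_j\subseteq \R^n$ for the coordinate subspace spanned by the standard basis vectors indexing $J_j$, we have $\R^n=\bigoplus_{j=1}^t V_j$ with each $V_j$ itself $T$-invariant.

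The first main step is to verify that every $T$-invariant subspace $W$ of $\R^n$ splits as $W=\bigoplus_{j=1}^t(W\cap V_j)$. Since the polynomials $f_1^{k_1},\dots,f_t^{k_t}$ are pairwise coprime, there exist polynomials $p_j(x)\in\R[x]$ such that $\sum_j p_j(x)\equiv 1 \pmod{c_T(x)}$ and $p_j(T)$ is the projection onto $V_j$ along $\bigoplus_{i\ne j}V_i$. Because each $p_j(T)$ is a polynomial in $T$, it carries $W$ into $W$, so the $V_j$-component of any $w\in W$ lies in $W$ itself, which gives the desired decomposition. Consequently, the $T$-invariant subspaces of $\R^n$ are in bijection with tuples $(W_1,\dots,W_t)$, where each $W_j$ is a $T|_{V_j}$-invariant subspace of $V_j$.

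By Theorem \ref{Thm: T-invariant Single Block}, there are exactly $k_j+1$ such $W_j$ for each $j$, so the total count is $\prod_{j=1}^t(k_j+1)$. The parameters $k_j$ of the real Jordan blocks form a partition $\theta_1\vdash r$ (where $2r$ is the total size of the real blocks), and the parameters of the standard blocks form a partition $\theta_2\vdash s$ with $s=n-2r$. Setting $\mu=(r,s)\vDash n$, the derived composition $d_r(\mu,\theta)$ defined in Section \ref{Sec: CPM} is precisely the list of all $k_j$, so
\[
\prod_{j=1}^t (k_j+1) \;=\; \prod_{i=1}^{\ell(d_r(\mu,\theta))}\bigl(d_r(\mu,\theta)_i+1\bigr).
\]
For the converse realizability, given any admissible $r$ and any $\theta=(\theta_1,\theta_2)$ with $\theta_1\vdash r$ and $\theta_2\vdash s$, one selects distinct non-real complex conjugate pairs (one per part of $\theta_1$) and distinct real scalars (one per part of $\theta_2$), which is possible since $\R$ is infinite, and forms the corresponding direct sum of real and standard Jordan blocks to produce a $T$ realizing the count.

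The principal obstacle is the primary decomposition step: showing that the projections $p_j(T)$ stabilize every invariant $W$ and thereby force $W$ to respect the block decomposition $\R^n=\bigoplus V_j$. Once this is established, the remaining content is essentially bookkeeping. A minor subtlety worth noting is that distinct multipartitions may produce the same numerical product $\prod(k_j+1)$, but this causes no trouble since $\M_n$ is defined as a set and the duplicate values collapse automatically.
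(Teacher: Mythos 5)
Your proof is correct and follows essentially the same route as the paper's: reduce to real Jordan canonical form, split every invariant subspace across the Jordan blocks, count the $k_j+1$ invariant subspaces per block via Theorem \ref{Thm: T-invariant Single Block}, and encode the block sizes as the derived composition of a multipartition. The only difference is one of completeness rather than of method: where the paper simply asserts that the blocks are independent (citing \cite{HK1971}), you supply the polynomial-projection argument showing $W=\bigoplus_j(W\cap V_j)$, and you also spell out the realizability direction by choosing distinct real eigenvalues and distinct conjugate pairs, both of which the paper leaves implicit.
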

\begin{proof}
The real Jordan blocks occupy the first $2r$ rows of $[T]$. There could be a single block of size $2r$; or two blocks, consisting of one of size $2r-2$ and one of size 2, or one of size $2r-4$ and one of size 4, and so on. Since the order of the blocks is irrelevant, these possibilities can be described simply by the partitions of $r$, where a block of size $2k$ corresponds to a part in the partition of $r$ of size $k$. Each such block contains exactly $k+1$ nested $T$-invariant subspaces by Theorem \ref{Thm: T-invariant Single Block}. The analysis of the standard Jordan blocks in $[T]$ is similar, except there we use partitions of $s=n-2r$. Since the blocks are independent, any $T$-invariant subspace will be a direct sum of $T$-invariant subspaces originating from the individual blocks. Hence, for any two particular partitions $\theta_1\vdash r$ and $\theta_2\vdash s$, we can count the total number of $T$-invariant subspaces by considering the derived composition $d_r(\mu ,\theta)$, where $\mu$ is the composition $(r,s)$ and $\theta=(\theta_1,\theta_2)$ is a multipartition. Then, each part of $d_r(\mu ,\theta)$ corresponds to a single Jordan block in $[T]$ and the total number of $T$-invariant subspaces for a fixed $r$ is $\prod_{i=1}^{\ell\left( d_r(\mu,\theta)\right)}\left(d_r(\mu,\theta)_i+1\right)$. Letting $r$ range from 0 to $\left \lfloor \frac{n}{2} \right \rfloor$ exhausts all possibilities and completes the proof.
\end{proof}

We end this section with an example.
\begin{ex}
 We examine the case of $n=4$. We list the possibilities for $d_r(\mu,\theta)$ and the corresponding number $N_{r}$ of $T$-invariant subspaces for each value of $r$ with $0\le r \le \left \lfloor \frac{n}{2} \right \rfloor=2$. Recall that $s=n-2r$.
 \begin{itemize}
   \item $r=0$ and $s=4$.\\
   Thus $\mu=(0,4)$, and
   \[\theta_1\in \{(0)\}, \quad  \theta_2\in \{(4),(3,1),(2,2),(2,1,1),(1,1,1,1)\}.\]

 \begin{table}[h]
 \centering
\begin{tabular}{cc}
$d_0(\mu,\theta)$ & $N_{0}$\\
\noalign{\smallskip}\hline\noalign{\smallskip}
 (0,4) & 5\\
     (0,3,1) & 8\\
     (0,2,2) & 9\\
     (0,2,1,1) & 12\\
     (0,1,1,1,1) & 16\\
\noalign{\smallskip}\hline\\
\end{tabular}
\caption{Values of $d_0(\mu,\theta)$ and $N_{0}$}
\label{Table:1}
\end{table}

   \item $r=1$ and $s=2$.\\
   Thus $\mu=(1,2)$, and
   \[\theta_1\in \{(1)\}, \quad \theta_2\in \{(2),(1,1)\}.\]
   \begin{table}[h]
\centering
\begin{tabular}{cc}
$d_1(\mu,\theta)$ & $N_{1}$\\
\noalign{\smallskip}\hline\noalign{\smallskip}
 (1,2) & 6\\
     (1,1,1) & 8\\
\noalign{\smallskip}\hline\\
\end{tabular}
\caption{Values of $d_1(\mu,\theta)$ and $N_{1}$}
\label{Table:2}
\end{table}

   \item $r=2$ and $s=0$.\\
   Thus $\mu=(2,0)$, and
   \[\theta_1\in \{(2),(1,1)\}, \quad  \theta_2\in \{(0)\}.\]

   \begin{table}[h]
\centering
\begin{tabular}{cc}
$d_2(\mu,\theta)$ & $N_{2}$\\
\noalign{\smallskip}\hline\noalign{\smallskip}
(2,0) & 3\\
     (1,1,0) & 4\\
\noalign{\smallskip}\hline\\
\end{tabular}
\caption{Values of $d_2(\mu,\theta)$ and $N_{2}$}
\label{Table:3}
\end{table}

 \end{itemize}
 \newpage
Combining the information from Tables \ref{Table:1}, \ref{Table:2} and \ref{Table:3} gives \[\M_4=\{3,4,5,6,8,9,12,16\}.\]
\end{ex}

\section{Extending The Results}\label{Sec: Extension}

In this section we outline an approach, and indicate some of the difficulties therein, to extend the results of Section \ref{Sec: Results} to finite dimensional vector spaces $V$ over an arbitrary field $\F$. We point out that this problem has recently been solved in the case when $\F$ is finite \cite{HF2011}. We begin with a definition.
\begin{defn}[Generalized Jordan Block]\label{Def:GenJordanBlock}
Let $\F$ be a field, and let $f(x)\in \F[x]$ be a monic polynomial of degree $d$ that is irreducible over $\F$. The $kd\times kd$ \emph{generalized Jordan block corresponding to $f(x)$} is the $kd \times kd$ matrix
 \[J_{f(x),k}:=\begin{bmatrix} C(f(x)) & N & 0 & \cdots & 0 & 0\\ 0 & C(f(x)) & N & \cdots & 0 & 0\\ \vdots & \vdots & \vdots & \ddots & \vdots & \vdots\\ 0 & 0 & 0 & \cdots & C(f(x)) & N\\ 0 & 0 & 0 & \cdots & 0 & C(f(x))\end{bmatrix},\]
 where $C(f(x))$ is the companion matrix of $f(x)$, and $N$ is the $d\times d$ matrix with a 1 in location $(1,d)$ and zeros everywhere else.
\end{defn}
\begin{rem}
  The idea of a generalized Jordan block can be extended further to allow for the possibility that $f(x)$ is not irreducible over $\F$, but such a generalization is not needed here \cite{AW1992}.
\end{rem}
\begin{thm}[General Jordan Canonical Form]\label{Thm:GJCF}
Let $V$ be a finite-dimensional vector space over a field $\F$ and let $T\in \L(V)$.
 Then there is an ordered basis $\B$ of $V$ such that
\[[T]_{\B}=\bigoplus_{j=1}^t J_j,\]
where each $J_j$ is a generalized Jordan block, and the matrix $[T]_{\B}$ is unique up to the order of the blocks.
\end{thm}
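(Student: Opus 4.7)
The plan is to prove existence via the structure theorem for finitely generated modules over a PID, and then to deduce uniqueness from the resulting numerical invariants. View $V$ as a finitely generated $\F[x]$-module in which $x$ acts as $T$. Since $V$ is finite-dimensional over $\F$ and the minimal polynomial $m_T$ annihilates all of $V$, this module is torsion, and the elementary divisor form of the structure theorem yields an $\F[x]$-module isomorphism
\[V \;\cong\; \bigoplus_{j=1}^{t} \F[x]\big/\bigl(f_j(x)^{k_j}\bigr),\]
with each $f_j(x) \in \F[x]$ monic and irreducible. Translating back, this is a $T$-invariant direct sum $V = \bigoplus_{j=1}^{t} V_j$, in which each $V_j$ is a cyclic $\F[x]$-submodule with annihilator $\bigl(f_j^{k_j}\bigr)$. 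The remaining task is to choose, on each $V_j$ separately, an ordered basis that realizes the corresponding generalized Jordan block.

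On such a cyclic summand, fix a generator $v \in V_j$ satisfying $f_j(T)^{k_j} v = 0$ and $f_j(T)^{k_j - 1} v \neq 0$. Writing $d = \deg f_j$ and $k = k_j$, so that $\dim_{\F} V_j = kd$, my candidate ordered basis is
\[\B_j \;=\; \bigl(v_{1,1}, \ldots, v_{d,1},\; v_{1,2}, \ldots, v_{d,2},\; \ldots ,\; v_{1,k}, \ldots, v_{d,k}\bigr), \quad v_{i,l} := T^{i-1} f_j(T)^{k-l} v.\]
Linear independence is immediate: the polynomials $x^{i-1} f_j(x)^{k-l}$, as $(i,l)$ ranges over $\{1,\ldots,d\} \times \{1,\ldots,k\}$, have every degree in $\{0, 1, \ldots, kd - 1\}$ exactly once, so their images form a basis of $\F[x]/(f_j^k) \cong V_j$. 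To verify that $[T|_{V_j}]_{\B_j}$ equals $J_{f_j(x), k_j}$, I write $f_j(x) = x^d + \sum_{i=0}^{d-1} a_i x^i$ and compute the image of each basis vector: for $1 \le i < d$ one has $T v_{i,l} = v_{i+1, l}$, which is exactly the subdiagonal contribution of the companion block $C(f_j)$; and for $i = d$, the identity $T^d = f_j(T) - \sum_i a_i T^i$ together with the recursion $f_j(T) v_{1,l} = v_{1, l-1}$ gives
\[T v_{d, l} \;=\; v_{1, l-1} \;-\; \sum_{i=1}^{d} a_{i-1} v_{i, l}, \qquad (v_{1, 0} := 0).\]
The sum reproduces the last column of the diagonal block $C(f_j)$, while the lone term $v_{1, l-1}$ supplies the unique nonzero entry of the superdiagonal block $N$ at position $(1, d)$, exactly matching Definition \ref{Def:GenJordanBlock}. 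Concatenating the bases $\B_j$ yields the required $\B$.

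For the uniqueness assertion, the multiset $\{(f_j, k_j)\}_{j=1}^{t}$ coming out of the structure theorem is a complete isomorphism invariant of $V$ as an $\F[x]$-module, and hence a similarity invariant of $T$; this forces the block data in any such $\B$ to be determined up to reordering. The one genuine obstacle is bookkeeping: carefully aligning the indexing of $\B_j$ with the block positions in $J_{f_j(x), k_j}$ so that the single nonzero entry of $N$ at position $(1, d)$ corresponds exactly to the recursion $f_j(T) v_{1,l} = v_{1, l-1}$. If one wishes to avoid invoking the structure theorem wholesale, the same conclusion can be reached by first using primary decomposition (via the coprimality of the distinct $f_j^{k_j}$) to reduce to the case where $m_T = f^k$ for a single irreducible $f$, and then inductively peeling off cyclic summands by an analogue of the classical Jordan argument; the explicit construction of $\B_j$ above is unaffected.
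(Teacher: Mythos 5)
Your proof is correct. Note, however, that the paper does not prove Theorem \ref{Thm:GJCF} at all: it is quoted as a known result, with the reader referred to \cite{AW1992}, so there is no in-paper argument to compare against. Your route --- the elementary-divisor form of the structure theorem for finitely generated torsion $\F[x]$-modules, followed by an explicit basis $v_{i,l}=T^{i-1}f_j(T)^{k-l}v$ on each cyclic summand --- is the standard textbook derivation, and your verification is sound: the degree count $(i-1)+d(k-l)$ hitting each of $0,\dots,kd-1$ exactly once gives linear independence, and the computation of $Tv_{d,l}$ correctly places the single $1$ of $N$ at local position $(1,d)$ in block $(l-1,l)$. Two small points worth making explicit if you write this up: (i) the identification of the last column of $C(f_j)$ with $-a_0,\dots,-a_{d-1}$ and of the subdiagonal with $Tv_{i,l}=v_{i+1,l}$ presupposes one of the two standard companion-matrix conventions, which the paper never pins down; and (ii) for uniqueness you should note the converse direction, namely that any matrix of the form $\bigoplus_j J_{f_j,k_j}$ presents $V$ as $\bigoplus_j \F[x]/(f_j^{k_j})$ (this follows by reading your existence computation backwards), so that the block multiset really is recovered from the elementary divisors and not merely determined by them.
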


A careful analysis of the invariant subspaces of the individual generalized Jordan blocks, combined with Theorem \ref{Thm:GJCF}, should yield a theorem analogous to Theorem \ref{Thm: Main Theorem} in this situation. One annoyance is that $\F$ might not have the property that a finite extension of $\F$ is algebraically closed, as is the case of $\R$. In other words, there might exist irreducible polynomials over $\F$ of arbitrarily large degree. However, this concern could be handled on somewhat of a case-by-case basis. Since $\dim_{\F}(V)$ is finite, the possibilities for the sizes of the generalized Jordan blocks would be limited as well. That is, the largest degree of an irreducible factor of $c_T$ would be bounded above by $\dim_{\F}(V)$. One would still have to consider the possibility of blocks of various sizes corresponding to irreducible polynomials whose degrees range from 1 to $\dim_{\F}(V)$.




\bibliographystyle{plain}







\end{document}